\documentclass[11pt,fleqn]{amsart} 

\usepackage{amsthm}
\usepackage{amsfonts}
\usepackage[english]{babel}
\usepackage{graphicx}
\usepackage{soul}
\usepackage{stfloats}
\usepackage{morefloats}
\usepackage{cite}
\usepackage{lscape}
\usepackage{epstopdf}
\usepackage{braket}
\usepackage[lite]{amsrefs}
\usepackage{mathbbol}
\usepackage{tikz,tikz-cd}
\usepackage{shuffle}

\usepackage{algorithm,algorithmicx,algpseudocode}

\usepackage{amsmath,amstext,amsopn,amsfonts,eucal,amssymb}
\usepackage{graphicx,wrapfig,url}

\newcommand\N{{\mathbb N}}

\newcommand\R{{\mathbb R}}
\newcommand\C{{\mathbb C}}

\newtheorem{theorem}{Theorem}[section]

\newtheorem{lemma}[theorem]{Lemma}

\newtheorem{definition}[theorem]{Definition}

\DeclareMathOperator{\supp}{\rm supp}

\begin{document}

	\title[Approximation in TVS]{Universal approximation of continuous maps between topological vector spaces}

	\author{Emanuele Zappala} 
	\address{Department of Mathematics and Statistics, Idaho State University\\
		Physical Science Complex |  921 S. 8th Ave., Stop 8085 | Pocatello, ID 83209} 
	\email{emanuelezappala@isu.edu}

	\maketitle

	\begin{abstract}
			
			We study the problem of universal approximation of continuous maps between topological vector spaces by neural networks. We provide a universal approximation theorem for maps between locally convex topological vector spaces with and without paracompactness assumptions. We extend this result to continuous maps between non-locally convex topological vector spaces on compact sets under the assumption of finite topological dimension. 
	\end{abstract}

	\date{\empty}

	\tableofcontents

	\section{Introduction}
	
	Operator learning is a branch of machine learning that deals with the approximation of continuous maps between function spaces \cite{Lu,Lanthaler,Kovachki}, usually Banach or Hilbert spaces. One of the earliest universal approximation results for continuous maps between some general classes of Banach spaces is found in \cite{Chen-Chen}. 
	More recently, theoretical work on the approximation properties of neural networks for maps with either domain or codomain in topological vector spaces (TVSs) has been initiated in  \cite{Ismailov,Saini}. Part of the motivation for the interest in neural networks applied to more general TVSs than normed spaces, concerns the fact that in various setups, deep learning is involved with spaces of test functions and distributions, which naturally carry a locally convex TVS structure, but are not normed spaces. 
	
	The present work can be considered as an extension of the work of \cite{Ismailov,Saini}, where we consider also the case of non-locally convex TVSs. We point out, however some important differences that exist also between the approach followed in this article, and the results in \cite{Saini}. Unlike dual-based approaches of \cite{Saini}, our construction avoids continuous linear functionals on the ambient input space, relying instead on a finite-dimensional reduction built from points, neighborhoods, and partitions of unity. This provides an intrinsic finite-dimensional reduction that avoids reliance on continuous duals, which may be difficult to characterize explicitly in general TVSs. 
	
	The results in \cite{Bil-Xan} contain universal approximation results on vector lattices, based on the use of dual spaces. It might be of interest to investigate a variation of the current work applied to  vector lattices as in \cite{Bil-Xan}. 
	
	The main results of the present work show that continuous maps between locally convex TVSs can be approximated (over a compact) with arbitrary precision. See Theorem~\ref{thm:Universal} and Theorem~\ref{thm:Universal_no-paracompact}. When the underlying spaces are not locally convex, a similar result can be obtained under more restricting assumptions regarding the topological dimensionality of the compact.  
	
	\section{Preliminaries}
	
	In this section we recall some basic definitions, along with some notation and conventions used throughout the article. The results in this section are standard, and can be found for example in \cite{Rudin,Kantorovich-Akilov}. 
	
	\begin{definition}
		{\rm 
				A topological vector space (TVS) over a topological field $\mathbb k$ is a vector space endowed with a topology that makes the scalar multiplication and addition operations continuous. 
		}
	\end{definition}
	In this article, we focus our attention on the case where $\mathbb k = \R$, and all TVSs are Hausdorff. The latter condition is a common restriction, since if $X$ is a non-Hausdorff TVS, the quotient space $X/\overline{\{0\}}$ obtained by quotienting out the closure of the singleton containing the zero element of $X$ is a Hausdorff TVS. Over the reals (or complex numbers), a subset $A$ of a TVS is said to be \emph{balanced}, or \emph{circled},  if $\alpha A \subseteq A$ for every $|\alpha| \leq 1$. 
	
	We recall that every TVS has a fundamental system of balanced neighborhoods of $0$, where a fundametnal system of $0$ is a set which is a local basis at $0$. We will denote such system by $\mathcal U$. A TVS $X$ is said to be \emph{locally convex} when we can find a fundamental system $\mathcal U$ consisting of balanced and convex sets. The neighborhoods can be chosen open without loss of generality, and we will make this assumption for convenience. 

	Given a vector space $X$ over $\R$ or $\C$, a seminorm is a function $p : X \longrightarrow \R_+$ satisfying the properties: 
	\begin{itemize}
		\item 
			$p(x + y) \leq p(x) + p(y)$ for every $x,y \in X$; 
		\item 
			$p(sx) = |s|p(x)$ for every $x\in X$ and $t\in  \R$ or $\C$.
	\end{itemize} 
	If $X$ is a TVS endowed with a seminorm $p$, we define the ball centered at $x\in X$ of radius $r>0$ associated to the seminorm as $B_p(x,r) := \{y\in X\ |\ p(x-y) < r \}$, similarly to how balls are defined in a normed space. We recall the fundamental result that a TVS is locally convex if and only if its topology is generated by a family of seminorms $\{p_\lambda\}_{\lambda \in \Lambda}$, where $\Lambda$ is some indexing set. In this case, a neighborhood basis at $0$ is given by the family of sets of type $\bigcap_{j=1}^n B_{\lambda_j}(0,r_j)$, for some $n\in \N$ and $\lambda_1, \ldots, \lambda_n \in \Lambda$, where $B_{\lambda_j}(0,r_j)$ indicates the ball of radius $r_j$ centered at $0$ in the seminorm $p_{\lambda_j}$.  
	
	Let $X$ denote a topological space. The \emph{topological dimension}, also called \emph{Lebesgue covering dimension}, of $X$ is defined as the minimal natural number $n\in \N$, if it exists, such that every open covering of $X$ has a refinement with order at most equal to $n+1$. When a finite $n$ cannot be found, the topological dimension is said to be infinite. 
	
	In this article, we will use multiple times Kat\v{e}tov's Theorem~3 from \cite{Katetov}. This article was found to contain an error in a lemma that was used in the proof of the theorem as well, see \cite{Katetov2}. However, the correction article \cite{Katetov2} contains another approach that bypasses the problem, and therefore Theorem~3 in \cite{Katetov} stands as stated. We will refer to Theorem~3 of \cite{Katetov} throughout the present work, although for a proof one needs to consult also \cite{Katetov2}. The articles \cite{Frolik,Rob-Rob,Gar-Mil} contain results that can be used to obtain similar applications, although under stronger conditions than what needed in this work.

	\section{Universal approximation in locally convex topological vector spaces}
	
	In this section we consider the approximation of maps between locally convex topological vector spaces via neural networks. The main universal approximation theorem is first obtained under the assumption that the spaces are paracompact, and it is afterward shown how this hypothesis can be removed by constructing partitions of unity subordinate to given finite coverings of compacts. The approach followed in the proofs adapts earlier work in \cite{Projection} for Banach spaces. We mention some conceptual differences of interest before proceeding with the proofs. 
	
	Since the closed convex hull of a compact in a Banach space is compact, given a map $T : X \longrightarrow Y$ between two Banach spaces, where $K \subset X$ is compact, we can consider the approximation of $T$ on the closed convex hull of $K$. Therefore, it is enough to restrict one's attention to the case where a given compact is also convex. If $T : K \longrightarrow Y$ is continuous, we can continuously extend $T$ to $X$, and then again consider the closed convex hull of $K$. However, the closed convex hull of a compact is guaranteed to be compact under the assumption that $X$ be complete, and therefore downgrading $X$ from being a Banach space to being just a normed space would substantially change the aforementioned conclusions. 
	
	While an initial proof of the universal approximation result in \cite{Projection} made use of convexity, a more recent revised version has avoided the need for such assumption. As a consequence, the approaches of the proofs of Theorem~2.1 and Theorem~2.2 in \cite{Projection} remain valid also for normed spaces without any completeness requirement. In the present work, we consider a further adaptation of such approach to avoid the use of norms, and provide a version of the universal approximation result for locally convex TVSs. 
	
	Lastly, we mention that the problem of obtaining universal approximations for continuous maps $T : K \subset X \longrightarrow Y$, where $X$ and $Y$ are Banach (or simply normed) spaces and $K$ is compact, follows from the problem of universal approximations of continuous maps $T : X \longrightarrow Y$ over a compact $K \subset X$, since a map $T$ defined over $K$ can automatically be continuously extended to the whole space $X$. When $X$ and $Y$ are TVSs, we are not aware of analogous extension results that can be applied directly. However, as it will be shown in the first part of the proof below, if $T$ is continuous and defined on the compact $K$, we can approximate it with arbitrary precision on $K$ by a (uniformly) continuous map applying Kat\v{e}tov's Theorem 3 in \cite{Katetov}. Therefore, the universal approximation result below also gives an analogous result for maps of type $T : K \subset X \longrightarrow Y$. 
	
	\begin{theorem}\label{thm:Universal}
		Let $X$ and $Y$ be locally convex paracompact TVSs, let $T: X\longrightarrow Y$ be a continuous map, and let $K\subset X$ be a compact subset. Then, for any choice of an neighborhood $U$ of $0$, there exist natural numbers $n,m\in \mathbb N$, finite dimensional subspaces $E_n \subset X$ and $E_m \subset Y$, a continuous map $P_X : X\longrightarrow E_n$, and a neural network $f_{n,m} : \mathbb R^n \longrightarrow \mathbb R^m$ such that for every $x\in K$
		\begin{eqnarray}
		T(x) - \varphi_m^{-1}f_{n,m}\varphi_nP_X(x) &\in& U,
		\end{eqnarray}
		where $\varphi_k : E_k \longrightarrow \mathbb R^k$ indicates a linear homeomorphism between the finite dimensional space $E_k$ and $\mathbb R^k$.  
	\end{theorem}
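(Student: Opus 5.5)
We approximate $T$ on $K$ by a finite-dimensional "interpolation" operator built from a partition of unity, and then replace the resulting continuous map between Euclidean spaces by a neural network. We may assume $U$ is a balanced convex open neighborhood of $0$ in $Y$ (shrinking $U$ if necessary, since $Y$ is locally convex). Choose a balanced convex open $V$ with $V+V\subseteq U$. For each $x\in K$, continuity of $T$ at $x$ gives a balanced convex open neighborhood $W_x$ of $0$ in $X$ with $T(x+W_x)\subseteq T(x)+\tfrac12 V$. By compactness, finitely many translates $x_1+W_{x_1},\dots,x_N+W_{x_N}$ cover $K$. Paracompactness of $X$ (together with $X\setminus K$, or directly via paracompact Hausdorff $\Rightarrow$ existence of subordinate partitions of unity) yields continuous functions $\psi_1,\dots,\psi_N : X\to[0,1]$ with $\supp\psi_i\subseteq x_i+W_{x_i}$ and $\sum_i\psi_i\equiv 1$ on $K$. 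Normalize by adding the open set $X\setminus K$ to the cover and discarding its function, so $\sum_i\psi_i=1$ on $K$ and $\le 1$ everywhere.

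Set $E_n:=\mathrm{span}\{x_1,\dots,x_N\}$ and define $P_X(x):=\sum_i\psi_i(x)x_i$; this is continuous $X\to E_n$ since the $\psi_i$ are. Let $E_m:=\mathrm{span}\{T(x_1),\dots,T(x_N)\}$. Define the continuous map $g:E_n\to E_m$ that will be approximated by the network. The natural choice is to work with the coefficients $\psi_i(x)$ rather than with $P_X(x)$, because $P_X$ need not be injective. To keep $P_X$ landing in $E_n$ while preserving the coefficients, I will instead take $E_n:=\R^N$ realized inside $X$ only when the $x_i$ are linearly independent. The cleaner route is to perturb the $x_i$ slightly: replace each $x_i$ by a point $\tilde x_i\in x_i+\tfrac12W_{x_i}$ so that the $\tilde x_i$ are linearly independent (possible if $\dim X\ge N$; the finite-dimensional case is handled directly, since then $X\cong\R^n$ and classical universal approximation applies with $P_X=\mathrm{id}$). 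Then $\varphi_n P_X(x)=(\psi_1(x),\dots,\psi_N(x))$ in the basis $\tilde x_i$.

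For $x\in K$ with $\psi_i(x)\neq0$ we have $x\in x_i+W_{x_i}$, so $T(x)-T(x_i)\in V$ up to the factor fixed above. By convexity of $V$ and $\sum_i\psi_i(x)=1$,
\[
T(x)-\sum_i\psi_i(x)T(x_i)=\sum_i\psi_i(x)\bigl(T(x)-T(x_i)\bigr)\in V.
\]
Hence the map $L:\R^N\to\R^m$, $L(c)=\varphi_m\bigl(\sum_i c_iT(x_i)\bigr)$, which is linear, satisfies $T(x)-\varphi_m^{-1}L\varphi_nP_X(x)\in V$ on $K$. The image $\varphi_nP_X(K)$ lies in the compact simplex in $\R^N$. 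By the classical universal approximation theorem, there is a network $f_{n,m}$ with $\sup_{\Delta}|f_{n,m}-L|<\delta$ (a linear map is itself trivially a one-layer network, but we use approximation for generality). Since $\varphi_m^{-1}$ is a linear homeomorphism onto the finite-dimensional subspace $E_m$ with its induced Hausdorff topology, $\varphi_m^{-1}(B_\delta)\subseteq V$ for $\delta$ small. Then $V+V\subseteq U$ finishes the proof.

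The main obstacle is arranging that $\varphi_nP_X$ faithfully encodes the partition-of-unity coefficients, i.e. the linear-independence perturbation. This requires checking that the perturbed centers still satisfy $T(\tilde x_i)$ close to $T(x)$ on the relevant supports, which is why the $W_{x_i}$ are halved. The other delicate point is the existence of a partition of unity subordinate to the cover using paracompactness, which is exactly the role of that hypothesis.
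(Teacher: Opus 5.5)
Your argument is correct, and it takes a genuinely different and shorter route than the paper. The paper makes $P_X$ an approximate identity on $K$ (it arranges $x-P_X(x)\in V'$). That forces three extra steps: first replacing $T$ by a uniformly continuous map (the interpolant $\sum_i\eta_i(x)T(x_i)$, extended to $X$ via Kat\v{e}tov's theorem), then building a second projection $P_Y$ on the compact $TP_X(K)$, and finally letting the network approximate the genuinely nonlinear map $\varphi_mP_YT\varphi_n^{-1}$ on $\varphi_nP_X(K)$, with a three-term error split. You instead take that interpolant itself as the approximant and observe that it factors through the coefficient vector $(\psi_1(x),\dots,\psi_N(x))$. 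Using linearly independent nodes in $P_X$ makes $\varphi_nP_X$ equal to that vector, so the network only has to reproduce a linear map on the simplex. This dispenses with uniform continuity, Kat\v{e}tov's theorem, $P_Y$, and any convexity in $X$. Your argument uses local convexity only of $Y$ (for the convex $V$) and paracompactness only of $X$ (for the partition of unity), so it proves a slightly stronger statement. The price is that $P_X(x)$ bears no relation to $x$, and all the nonlinearity of $T$ is absorbed into $P_X$. In the paper's construction the encoder is a genuine projection and the network carries the nonlinear dependence on the input, which is the point of the projection-method viewpoint; your route shows that the theorem as formulated puts almost no burden on the network. Two small corrections. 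First, the perturbation and halving discussion is unnecessary: your $L$ uses $T(x_i)$ rather than $T(\tilde x_i)$, so the $\tilde x_i$ may be any $N$ linearly independent vectors of $X$ and nothing about them needs checking. Second, when $\dim X<N$, the phrase ``classical universal approximation with $P_X=\mathrm{id}$'' must still be combined with the output reduction. That is, apply the classical theorem to the continuous map $u\mapsto\varphi_m\bigl(\sum_i\psi_i(\varphi_n^{-1}u)T(x_i)\bigr)$ on the compact $\varphi_n(K)$, and the same $V+V\subseteq U$ estimate follows.
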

	\begin{proof}
		We first show that we can approximate $T$ by a uniformly continuous map with arbitrary precision over $K$. Effectively, this would mean that we can assume without loss of generality that $T$ is uniformly continuous. 
		
		Given an open set $U$, we let $W\in \mathcal U$ be a convex open set such that $W \subset U$. We define the covering of $K$ given by $x + V_x$, for each $x\in K$, where $V_x \in \mathcal U$ is chosen in such a way that $y \in V_x$ implies $T(x) - T(y) \in W$. Obtain a finite subcovering $U_i := x_i + V_i$, with $i = 1, \ldots, r$, where $V_i$ is a shorthand notation for $V_{x_i}$. Let $\{\eta_i\}$ be a partition of unity subordinate to the covering $\{U_i\}_{i=1}^r$. Define the map $F : K \longrightarrow Y$ with the assignment 
		\begin{eqnarray*}
			F(x) &=& \sum_{i=1}^r \eta_i(x)T(x_i).  
		\end{eqnarray*}
		By the Heine-Cantor theorem, each function $\eta_i$ is uniformly continuous. Applying Theorem 3 in \cite{Katetov}, since TVSs are uniform spaces, each function $\eta_i$ can be extended to the whole space $X$. We indicate such extension by $\hat \eta_i$. The corresponding function 
		\begin{eqnarray*}
			\hat F(x) &=& \sum_{i=1}^n \hat \eta_i(x)T(x_i),   
		\end{eqnarray*}
		is uniformly continuous and extends $F$. By construction, it follows that $\hat F$ approximates the function $T$ over $K$ within $U$-precision. In fact, for $x\in K$, we have that
		\begin{equation*}
		T(x) - \hat F(x) = T(x) - F(x) = T(x) - \sum_{i=1}^n \eta_i(x)T(x_i) = \sum_{i=1}^n \eta_i(x)(T(x) - T(x_i)) \in W \subset U, 
		\end{equation*}
		by convexity of $W$. It therefore suffices to prove the theorem under the additional
		assumption that $T$ is uniformly continuous. Indeed, given the
		original neighborhood $U$, choose a balanced convex neighborhood
		$U_0$ such that $U_0+U_0\subseteq U$. The preceding construction
		provides a uniformly continuous map $\widetilde T:X\to Y$ satisfying
		\begin{eqnarray*}
		T(x)-\widetilde T(x)\in U_0
		\quad\text{for every }x\in K.
	\end{eqnarray*}
		If the theorem is proved for $\widetilde T$ with approximation
		accuracy $U_0$, the resulting approximation differs from the original
		map $T$ by an element of $U_0+U_0\subseteq U$ on $K$. 
		
		We now prove the result of a uniformly continuous $T$. Given a fixed $U$, let $V \in \mathcal U$ be such that $V + V + V \subseteq U$. By uniform continuity of $T$, let $V' \in \mathcal U$ be a convex open such that $T(z_1) - T(z_2) \in V$ if $z_1-z_2\in V'$. We cover the compact $K$ by the open sets $x+V'$, and find a finite subcovering $\mathcal A = \{x_1+V', \ldots, x_{n'} +V' \}$. Let $\{\eta_i\}_{i=1}^{n'} \cup \{\eta_\infty\}$ be a partition of unity subordinate to $\mathcal A \cup \{X-K\}$, and define the map $P_X(x) = \sum_{i=1}^{n'}\eta_i(x)x_i$, where $\eta_i$ is subordinate to $x_i + V'$ and $\eta_\infty$ is subordinate to $X-K$. This map $P_X$ is well defined and continuous on $X$. Then, for each $x\in K$ we have
		\begin{equation*}
		x - P_X(x) = x - \sum_{i=1}^{n'}\eta_i(x)x_i = \sum_{i=1}^{n'}\eta_i(x)x - \sum_{i=1}^{n'}\eta_i(x)x_i = \sum_{i=1}^{n'}\eta_i(x)(x-x_i) \in V', 
		\end{equation*}
		since $V'$ is convex, and whenever $\eta_i(x) \neq 0$ we have that $x-x_i\in V'$. Let $E_X$ denote the vector subspace of $X$ spanned by $x_1, \ldots, x_{n'}$, and let $n := \dim E_X$. We choose a linear isomorphism $\varphi_n : E_X \longrightarrow \R^n$, which by Theorem~1.21 in \cite{Rudin} is a homeomorphism. Set $L = TP_X(K)$, which is compact in $Y$ by continuity of $T$ and $P_X$. As for the construction of $P_X$, we can obtain a continuous map $P_Y$, such that $z - P_Y(z) \in V$, whenever $z \in L$. We let $E_Y$ denote the finite dimensional subspace of $Y$ spanned by the image of $P_Y$, and let $m := \dim E_Y$. 
		We fix a linear homeomorphism $\varphi_m : E_Y \longrightarrow \R^m$. We define the map $T_{n,m} : P_X(K) \longrightarrow E_Y$ as $T_{n,m} := P_Y\circ T_{|P_X(K)}$. Let $F_{n,m} := \varphi_m\circ T_{n,m} \circ \varphi_n^{-1} : \varphi_n(P_X(K)) \subset \R^n \longrightarrow \R^m$. Let $\varepsilon > 0$ be such that $\varphi^{-1}_m(w_1) - \varphi^{-1}_m(w_2) \in V$ whenever $\|w_1 - w_2\| < \varepsilon$, where $\|\cdot \|$ indicates the Euclidean norm of $\R^m$. 
		Since $\varphi_nP_X(K)$ is compact, we can approximate $F_{n,m}$ by a neural network $f_{n,m}$ in the uniform norm of $C(\varphi_nP_X(K),\R^m)$ with accuracy $\varepsilon$, by the classical Universal Approximation Theorem for neural networks \cite{Pinkus}. In other words, we have $\|F_{n,m}(u) - f_{n,m}(u)\| < \varepsilon$ for all $u\in \varphi_nP_X(K)$. Then, we have
		\begin{eqnarray*}
			\lefteqn{T(x) - \varphi^{-1}_mf_{n,m}\varphi_nP_X(x)}\\
			& = & T(x) - TP_X(x) + TP_X(x) - P_YTP_X(x) + P_YTP_X(x) - \varphi_m^{-1}f_{n,m}\varphi_nP_X(x)\\
			& \in &  V + V + V \subseteq U,  
		\end{eqnarray*}
		where each difference is seen to be in $V$ as follows. Since $x-P_X(x) \in V'$ for each $x\in K$, by choice of $V'$ in terms of the uniform continuity of $T$, hence we have that $T(x) - TP_X(x)\in V$. By construction of $P_Y$, we have that $y-P_Y(y) \in V$ whenever $y\in L = TP_X(K)$, and therefore $TP_X(x) - P_YTP_X(x)\in V$. Lastly, since $TP_X(x) = T_{|P_X(K)}P_X(x)$ for each $x\in K$, and $P_YT_{|P_X(K)} = T_{n,m}$, we have that $P_YTP_X(x) = T_{n,m}P_X(x)$ for each $x\in K$. Moreover, $T_{n,m} = \varphi^{-1}_mF_{n,m}\varphi_n$ on $P_X(K)$ by definition of $F_{n,m}$. Hence, $P_YTP_X(x) = \varphi^{-1}_mF_{n,m}\varphi_n(P_X(x))$. Since $f_{n,m}$ approximates $F_{n,m}$ up to precision $\varepsilon$ on $\varphi_n(P_X(K))$, we have that $\|F_{n,m}\varphi_n(P_X(x)) - f_{n,m}\varphi_n(P_X(x))\| < \varepsilon$. Therefore, by choice of $\varepsilon$, we have that $P_YTP_X(x) - \varphi_m^{-1}f_{n,m}\varphi_nP_n(x) = \varphi^{-1}_mF_{n,m}\varphi_n(P_X(x)) - \varphi_m^{-1}f_{n,m}\varphi_nP_n(x) \in V$. This completes the proof. 
	\end{proof}

	We point out some interesting facts related to the proof of Theorem~\ref{thm:Universal}. The first observation is that the use of neural networks is related only to the approximation of the map $F_{n,m} : \varphi(P_X(K)) \subset \R^n \longrightarrow \R^m$ in the uniform norm. Therefore, any other universal approximator of real vector functions in the uniform Euclidean norm on compacts, could replace the neural network. In other words, the approach of the proof does not depend on neural networks in a strict sense. Our main interest in this class of approximators relates mostly to their recent vast use in deep learning. 
	
	Secondly, we note that while the approximation error guarantees hold only for vectors $x\in K$, the resulting map $\varphi_m^{-1}f_{n,m}\varphi_nP_X$ is actually well defined and continuous on $X$ because the neural network $f_{n,m}$ is well defined on $\R^n$, for a standard feed-forward neural network, even though its approximation guarantees hold only over the compact $\varphi_n(P_X(K))$.  
	
	 Our objective now is to consider how to concretely determine partitions of unity for the construction of the maps $P_X$ and $P_Y$ in the proof of Theorem~\ref{thm:Universal}, with the aim of removing the paracompactness hypothesis.  
	 In \cite{Projection}, the map $P_m$|which corresponds to the present $P_Y$|can be automatically extended from $L = P_X(K)$ to $Y$, and therefore we can consider the map $T_{n,m}$ defined on $\R^n$ rather than the restriction to $P_X(K)$. This extension is not automatic in the present work without paracompactness, since for example we cannot directly use the Dugundji extension theorem. However, following the same procedure as in the first part of the proof above, applying Kat{\v{e}}tov's Theorem 3 in \cite{Katetov}, we can show that the map $P_Y$ can be extended from $L$ to $Y$ as a uniformly continuous map $P_Y : Y \longrightarrow E_m$. After the extension it is unclear what $P_Y$ is outside of $L$, and therefore there seems to be no clear conceptual advantage in either perspective. We have therefore settled for the conceptually simpler domain restriction.

	The following result uses standard arguments, and it is a reformulation of the fact that compact sets are paracompact, along with the characterization of Hausdorff paracompact spaces via the existence of partitions of unity subordinate to open coverings. We provide an explicit proof as it shows how to construct the partitions by means of the seminorms. As such, in combination with neural networks it provides a formula-based expression of the approximators, along with their existence.  
	
	\begin{lemma}\label{lem:no_paracompact}
		Let $K$ be a compact subset of a locally convex TVS $X$, and let $\mathcal A = \{A_i\}_{i=1}^m$ be a covering of $K$, where each $A_i$ is relatively open in $K$. Then, we can construct a partition of unity $\{\eta_i\}_{i=1}^n$ subordinate to $\mathcal A$.  
	\end{lemma}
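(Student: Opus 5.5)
The plan is to build the partition of unity explicitly out of seminorms, using compactness to reduce to finitely many ``bump'' functions. I would first use the fact that each $A_i$ is relatively open in $K$: for every $x\in A_i$ there is an open set $O\subset X$ with $O\cap K = A_i$. By the description of the neighborhood basis at $0$ via the seminorms $\{p_\lambda\}_{\lambda\in\Lambda}$, there are finitely many indices $\lambda_1,\ldots,\lambda_k$ and radii $r_1,\ldots,r_k>0$ with
\[
x+\bigcap_{j=1}^k B_{\lambda_j}(0,r_j)\subseteq O .
\]
Rescaling each seminorm by $1/r_j$ and taking the maximum, I get a single continuous seminorm $q_x := \max_j p_{\lambda_j}/r_j$ such that $B_{q_x}(x,1)\cap K\subseteq A_i$.

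The bump functions come from this seminorm. For each such $x$ I define
\[
g_x(y) := \max\{0,\,1-2q_x(y-x)\}.
\]
This is continuous on $X$, since a continuous seminorm is continuous and $\max$ is continuous. It is positive exactly on $B_{q_x}(x,1/2)$, and its support is contained in the closed ball $\{q_x(\cdot-x)\le 1/2\}$, which lies inside $B_{q_x}(x,1)$. The open sets $B_{q_x}(x,1/2)$, with $x$ ranging over $K$ and each $x$ assigned to some $A_i$ containing it, cover $K$. By compactness, finitely many of them suffice; call them $B_1,\ldots,B_N$, with bump functions $g_1,\ldots,g_N$ and assignments $i(\ell)\in\{1,\ldots,m\}$.

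Next I group and normalize. Set $h_i := \sum_{\ell : i(\ell)=i} g_\ell$, so that $h_i$ is continuous and $\supp h_i\cap K\subseteq A_i$ (a finite union of closed sets, each meeting $K$ inside $A_i$). Set $h := \sum_{i=1}^m h_i$. Each point of $K$ lies in some $B_\ell$, so $h>0$ on $K$, and $\eta_i := h_i/h$ is well defined and continuous on $K$. It satisfies $\sum_i\eta_i = 1$ on $K$ and $\supp\eta_i\subseteq A_i$ relative to $K$. Indices $i$ with $h_i\equiv 0$ simply give $\eta_i=0$, which explains the $n$ versus $m$ indexing in the statement.

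The main obstacle is the support condition. One must make sure that the relative closure in $K$ of $\{\eta_i\neq 0\}$ lies inside $A_i$, and not merely that the open set $\{\eta_i\ne0\}$ does. This is exactly why I shrink from radius $1$ to radius $1/2$: the closed $q_x$-ball of radius $1/2$ is closed in $X$, because $q_x$ is continuous, and it is contained in the open ball of radius $1$, whose trace on $K$ lies in $A_i$. If one also wants the $\eta_i$ to extend to a neighborhood of $K$, one can define $h_i/h$ on the open set $\{h>0\}\supseteq K$, which gives explicit seminorm-based formulas as advertised.
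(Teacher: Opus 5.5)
Your proof is correct and follows essentially the same route as the paper. Both arguments take a max of finitely many (rescaled) seminorms $q_x$ and use the truncated bump $\max\{0,1-2q_x(\cdot-x)\}$, whose closed support lies inside the twice-larger open ball (the paper's $\varepsilon_x$ versus $2\varepsilon_x$). Both then pass to a finite subcover by compactness and normalize by the sum, which is positive on an open neighborhood of $K$.

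The only difference is cosmetic. You group the bumps by cover index, so you get $\supp\eta_i\subseteq A_i$ with the original indexing. The paper instead keeps one function per subcover point, which is where its index $n$ comes from (not indices with $h_i\equiv 0$, as you suggest), and each support lies in some $A_{j(x_i)}$. Your grouped form is the one that plugs directly into $P_X=\sum_i\eta_i x_i$ in the proof of Theorem~\ref{thm:Universal_no-paracompact}.
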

	\begin{proof}
		We let $\{p_\lambda\}_{\lambda\in \Lambda}$ denote a family of seminorms that induce the topology of $X$. 
		For each $i= 1,\ldots, m$, let $O_i$ be open in $X$ such that $A_i = O_i\cap K$. For each $x\in K$, choose $j(x) \in \{1, \ldots, m\}$ such that $x\in O_{j(x)}$. Since $O_{j(x)}$ is open in $X$, we can find finitely many seminorms $p_{\lambda_1}, \ldots, p_{\lambda_{n(x)}}$ and $\varepsilon_x > 0$ such that $y \in O_{j(x)}$ for each $y\in \bigcap_{r=1}^{n(x)} B_{\lambda_r}(x,2\varepsilon_x)$, where $B_{\lambda_r}(x,d)$ denotes the ball of radius $d$ centered in $x$ with respect to the seminorm $p_{\lambda_r}$. We define the functions $q_x(y) := \max_{1\leq r \leq n(x)} p_{\lambda_r}(y)$, and $h_x(y) := \max \{0, \varepsilon_x - q_x(y-x)\}$. By construction,  we have $\supp h_x \subset O_{j(x)}$. We cover the compact $K$ by the open sets $B_{q_x}(x,\epsilon_x)$, with $x\in K$, where $B_{q_x}(x,\varepsilon_x)$ is the ball of radius $\varepsilon_x$ in the $q_x$ seminorm defined above. By compactness, we can find $x_1, \ldots, x_n$ such that $K \subset \bigcup_{i=1}^n B_i(x_i,\varepsilon_i)$, where $B_i(x_i,\varepsilon_i)$ is the unit ball in the $q_{x_i}$ seminorm with radius $\varepsilon_{x_i}$ centered at $x_i$. We set $H := \sum_{i=1}^n h_{x_i}$, and notice that by construction $H(x) \neq 0$ for every $x\in K$. Therefore, setting
		\begin{eqnarray*}
			\eta_i(x) &=& \frac{h_i(x)}{H(x)}, 
		\end{eqnarray*}
		we see that $\{\eta_i\}_{i=1}^n$ is a partition of unity subordinate to the covering $\mathcal A$. 
	\end{proof}
	
	The paracompactness hypothesis can therefore be removed from Theorem~\ref{thm:Universal}, since its use in the proof related only to the construction of a partition of unity subordinate to a given finite open covering of a compact. By Lemma~\ref{lem:no_paracompact} we can construct such partitions even when the underlying space is not paracompact, and the proof of Theorem~\ref{thm:Universal} would remain unchanged otherwise. We therefore have the following result. 
	
	\begin{theorem}\label{thm:Universal_no-paracompact}
		Let $X$ and $Y$ be locally convex TVSs, let $T: X\longrightarrow Y$ be a continuous map, and let $K\subset X$ be a compact subset. Then, for any choice of open neighborhood $U$ of $0$, there exist natural numbers $n,m\in \mathbb N$, finite dimensional subspaces $E_n \subset X$ and $E_m \subset Y$, a continuous map $P_X : K\longrightarrow E_n$ (defined and continuous on a neighborhood of $K$), and a neural network $f_{n,m} : \mathbb R^n \longrightarrow \mathbb R^m$ such that for every $x\in K$
		\begin{eqnarray}
		T(x) - \varphi_m^{-1}f_{n,m}\varphi_nP_X(x) &\in& U,
		\end{eqnarray}
		where $\varphi_k : E_k \longrightarrow \mathbb R^k$ indicates a linear homeomorphism between the finite dimensional space $E_k$ and $\mathbb R^k$.  
	\end{theorem}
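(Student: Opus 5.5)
The plan is to rerun the proof of Theorem~\ref{thm:Universal} unchanged in structure, replacing every use of paracompactness by Lemma~\ref{lem:no_paracompact}. The central simplification is to work only on $K$, or on an explicit open neighborhood of $K$, and never need a global partition of unity on $X$. This also removes the preliminary step where $T$ is replaced by a uniformly continuous map via Kat\v{e}tov's theorem. On the compact $K$ one can take uniform continuity directly: $T$ is continuous, so for a given $V\in\mathcal U$ there is a convex $V'\in\mathcal U$ such that $x\in K$ and $z-x\in V'$ imply $T(z)-T(x)\in V$.

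\textbf{Choosing $V'$ near $K$.} Fix $U$, and choose a convex balanced $V\in\mathcal U$ with $V+V+V\subseteq U$. For each $x\in K$ pick a convex $W_x\in\mathcal U$ with $T(x+W_x+W_x)-T(x)\subseteq \tfrac12 V$. Cover $K$ by $x+W_x$ and extract $x_1,\dots,x_k$. Let $V'=\bigcap_j W_{x_j}$. If $x\in K$ and $z-x\in V'$, then $x\in x_j+W_{x_j}$ for some $j$, so both $T(x)$ and $T(z)$ lie in $T(x_j)+\tfrac12V$. Since $V$ is balanced and convex, $T(z)-T(x)\in V$.

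\textbf{Constructing $P_X$.} Cover $K$ by finitely many $x_i+V'$, $i=1,\dots,n'$. Apply Lemma~\ref{lem:no_paracompact} to this covering. Its construction yields functions $h_{x_i}(y)=\max\{0,\varepsilon_i-q_{x_i}(y-x_i)\}$, which are continuous on all of $X$ and supported in $x_{j}+V'$. Their sum $H$ is positive on $K$, hence positive on the open set $N=\{H>0\}\supseteq K$. So $\eta_i=h_{x_i}/H$ is continuous on $N$. Define $P_X(x)=\sum_i\eta_i(x)x_{j(i)}$ on $N$; this is the sense of ``defined and continuous on a neighborhood of $K$''. Convexity of $V'$ gives $x-P_X(x)\in V'$ for $x\in K$, and hence $T(x)-TP_X(x)\in V$.

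\textbf{Constructing $P_Y$ and finishing.} Let $L=TP_X(K)$, which is compact. Apply the same construction in $Y$ to obtain $P_Y$ on a neighborhood of $L$ with $z-P_Y(z)\in V$ for $z\in L$. From here the argument is verbatim the proof of Theorem~\ref{thm:Universal}:
\begin{itemize}
\item take $E_n=\operatorname{span}\{x_i\}$ and $E_m=\operatorname{span}P_Y(L)$, with linear homeomorphisms $\varphi_n,\varphi_m$ (Theorem~1.21 in \cite{Rudin});
\item the map $F_{n,m}=\varphi_mP_YT\varphi_n^{-1}$ is continuous on the compact set $\varphi_nP_X(K)$, so the classical universal approximation theorem gives $f_{n,m}$ within $\varepsilon$, where $\varepsilon$ is chosen from continuity of $\varphi_m^{-1}$ at $0$;
\item the three-term telescoping decomposition then places the error in $V+V+V\subseteq U$.
\end{itemize}

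\textbf{Main obstacle.} The delicate step is the first one: obtaining the uniform-type continuity estimate without the Kat\v{e}tov extension and without a uniform structure on all of $X$. The half-neighborhood compactness argument above handles it, using only continuity of $T$ at points of $K$ and balancedness of $V$. A secondary point is making sure the partition functions of Lemma~\ref{lem:no_paracompact} really extend continuously to an open neighborhood. They do, because each $h_x$ is built from seminorms defined on all of $X$.
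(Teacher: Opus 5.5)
Your proof is correct. It departs from the paper's at one step: how the estimate $T(x)-TP_X(x)\in V$ for $x\in K$ is secured.

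The paper proves Theorem~\ref{thm:Universal_no-paracompact} by rerunning the proof of Theorem~\ref{thm:Universal}, with Lemma~\ref{lem:no_paracompact} supplying the partitions of unity. That proof opens by replacing $T$ with the surrogate $\hat F=\sum_i\hat\eta_i\,T(x_i)$. Here the $\hat\eta_i$ are uniformly continuous extensions to all of $X$, obtained via Kat\v{e}tov's Theorem~3, of a partition of unity on $K$. It then uses \emph{global} uniform continuity of $\hat F$, and pays for the reduction by splitting $U$ as $U_0+U_0$.

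You drop this reduction altogether. Your Lebesgue-number argument with the $W_x$ and $\tfrac12 V$ produces a $V'$ with $T(z)-T(x)\in V$ whenever $x\in K$ and $z-x\in V'$. That is all the telescoping step needs, because one of the two points is always $x\in K$: apply it with $z=P_X(x)$, using that $V'$ and $V$ are balanced to flip signs.

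Your route is more elementary and self-contained. It needs no extension theorem, so the Kat\v{e}tov erratum never enters. Every ingredient of the final approximator is explicit and built from finitely many seminorms, which matches the paper's own remark after the theorem.

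The paper's route buys a globally defined, uniformly continuous surrogate of $T$ with finite-dimensional range. That is more than Theorem~\ref{thm:Universal_no-paracompact} requires; it would even make $P_Y$ unnecessary. But it is the device reused in Lemma~\ref{lem:nonlocal_T_approx} and Theorem~\ref{thm:Universal_NLC}, where the finite-rank surrogate removes the need for any map $P_Y$ on $TP_X(K)$.
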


	Applying \cite{Katetov} one could obtain $P_X$ well defined on the whole space $X$. However, we would not know how $P_X$ is defined outside $K$. However, we notice that $P_X$ is well defined and continuous on a neighborhood of $K$ as a consequence of the proof of Lemma~\ref{lem:no_paracompact}, where each function $\eta_i$ is well defined and continuous on the open set $\bigcup_{i=1}^n B_i(x_i,\varepsilon_i)$ containing $K$. Each $\eta_i$ can be explicitly defined in terms of finitely many seminorms of $X$, and therefore they could also be implemented on a computer.

	\section{Universal approximation in non-locally convex topological vector spaces}
	
	When the spaces are not locally convex, we can obtain results similar to the previous section over subsets that have finite topological dimension. 
	
	\begin{lemma}\label{lem:nonlocal_T_approx}
				Let $X$ and $Y$ be paracompact TVSs, let $T: X\longrightarrow Y$ be a continuous map, and let $K$ be a compact subset of $X$ with finite topological dimension. Then, for any open neighborhood $U$ of $0$, we can construct a continuous map $F: K \longrightarrow Y$ with finite dimensional range $E$, which approximates $T$ up to $U$ precision on $K$, i.e. such that $T(x) - F(x) \in U$ for all $x\in K$. Such $F$ can be extended to $X$ as a uniformly continuous function $\hat F : X \longrightarrow E$. 
	\end{lemma}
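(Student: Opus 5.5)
The plan is to replace the convexity argument of Theorem~\ref{thm:Universal}, which is no longer available, with a nerve construction that exploits the finite topological dimension of $K$. Fix $U$ and choose a balanced open $V\in\mathcal U$ with $V+V\subseteq U$.

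\textbf{Step 1: a finite cover of small order.} For each $x\in K$, continuity of $T$ gives an open neighbourhood $O_x$ of $x$ with $T(y)-T(x)\in V$ for all $y\in O_x\cap K$. Compactness yields a finite subcover. Since $\dim K=d<\infty$, this subcover has a finite open refinement $\{A_j\}_{j=1}^r$ of order at most $d+1$, so every point of $K$ lies in at most $d+1$ of the $A_j$.

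\textbf{Step 2: a partition of unity and the nerve map.} Paracompactness of $X$ (or simply compactness of $K$) gives a partition of unity $\{\eta_j\}$ on $K$ subordinate to $\{A_j\}$. Choose $x_j\in A_j\cap K$ (discarding empty sets) and set
\[
F(x)=\sum_{j=1}^r \eta_j(x)\,T(x_j).
\]
This map is continuous and takes values in the finite dimensional space $E=\mathrm{span}\{T(x_j)\}$.

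\textbf{Step 3: the error estimate, which is the main obstacle.} Write
\[
T(x)-F(x)=\sum_{j\in J(x)}\eta_j(x)\bigl(T(x)-T(x_j)\bigr),
\]
where $J(x)$ is the set of active indices, with $|J(x)|\le d+1$. Without convexity, a convex combination of elements of $V$ need not lie in $V$. The intended fix is the following.

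First, bound the number of terms. Choose balanced neighbourhoods $V_1$ with $\underbrace{V_1+\cdots+V_1}_{d+1}\subseteq V$; this is possible by repeated halving using continuity of addition. Since $V_1$ is balanced and $0\le\eta_j\le1$, each term $\eta_j(x)(T(x)-T(x_j))$ lies in $V_1$, and the sum of at most $d+1$ terms lies in $V$.

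Second, make each difference small. This requires $T(x)-T(x_j)\in V_1$ whenever $x$ and $x_j$ lie in a common $A_j$. I would first pick $V_1$ depending only on $d$ and $V$, and only then choose the cover: each $O_x$ is taken so that $T(y)-T(x)\in V_2$ with $V_2-V_2\subseteq V_1$. Two points of $O_x$ then differ under $T$ by an element of $V_1$. Since $A_j\subseteq O_{x}$ for some $x$, the estimate follows.

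The order of quantifiers is the delicate point. The dimension $d$ is fixed by $K$ in advance, so $V_1$ and $V_2$ can be chosen before the cover, and the circularity disappears. Hence $T(x)-F(x)\in V\subseteq U$ on $K$.

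\textbf{Step 4: the extension.} Each $\eta_j$ is continuous on the compact set $K$, hence uniformly continuous by Heine--Cantor. Kat\v{e}tov's Theorem~3 in \cite{Katetov} (with the correction \cite{Katetov2}) extends each $\eta_j$ to a bounded uniformly continuous $\hat\eta_j$ on $X$, exactly as in the first part of the proof of Theorem~\ref{thm:Universal}. Then
\[
\hat F=\sum_j\hat\eta_j\,T(x_j)
\]
is uniformly continuous, because scalar multiplication by a fixed vector is uniformly continuous on bounded scalar sets and addition is uniformly continuous. It takes values in $E$ and extends $F$.
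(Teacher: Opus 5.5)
Your proof is correct and follows essentially the same route as the paper. Both arguments take a finite refinement of order at most $d+1$ of a cover on which $T$ varies little, form $F=\sum_j\eta_j T(x_j)$, absorb the error with a $(d+1)$-fold sum of a balanced neighbourhood, and extend the $\eta_j$ by Kat\v{e}tov's theorem. The differences are minor: the paper takes $x_i$ to be the centre of the cover set $T^{-1}(T(x_i)+W)$ containing $A_{\lambda_i}$, so a single $W$ with $W+\cdots+W\subseteq U$ suffices and your extra $V_2-V_2\subseteq V_1$ step is not needed. Conversely, your choice to build the partition of unity directly on the compact Hausdorff set $K$ shows that the paracompactness of $X$ is not actually used in this lemma.
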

	\begin{proof}
			Let $d = \dim K$ denote the topological dimension of $K$. Therefore, any covering of $K$ has a refinement of order at most $d+1$. 
			Let $W \in \mathcal U$ such that $W + \cdots + W \subset U$, where the sum is taken $d+1$ times. Consider the open covering of $K$ given by $\{T^{-1}(T(x) + W)\}_{x\in K}$. Therefore, by construction, if $y\in T^{-1}(T(x) + W)$, we have that $T(y) - T(x) \in W$. Let $\{A_\lambda\}_{\lambda\in \Lambda}$ be a refinement by relative opens of order at most $d+1$, where $\Lambda$ is an indexing set. This means that each $A_\lambda$ is contained inside $T^{-1}(T(x) + W)$ for some $x\in K$. By compactness, we can find finitely many $\lambda$'s, $\lambda_1, \ldots, \lambda_r$ such that $K \subset \bigcup_{i=1}^r A_{\lambda_i}$. We let $O_{\lambda_i}$ denote opens in $X$ such that $K \cap O_{\lambda_i} = A_{\lambda_i}$ for each $i$. Since each $A_{\lambda_i}$ is contained in $T^{-1}(T(x_i) + W)$ for some $x_i\in K$, we find $x_1, \ldots, x_k\in K$ such that each $T(A_{\lambda_i})$ is contained in one of the opens $T(x_j) + W$ where, possibly allowing repetitions among the $x_j$'s, we can assume that $k = r$, and each $T(A_{\lambda_i}) \subset T(x_i) + W$. 
			Let $\{\eta_i\}_{i=1}^r \cup \{\eta_\infty\}$ be a partition of unity subordinate to the covering $\{O_{\lambda_i}\}_{i=1}^r \cup \{X-K\}$, where $\supp \eta_i \subset O_{\lambda_i}$ and $\supp \eta_\infty \subset X-K$. We define the continuous map $F : K \longrightarrow Y$ by
			\begin{eqnarray*}
				F(x) &=& \sum_{i=1}^r \eta_i(x)T(x_i), 
			\end{eqnarray*}
			which is uniformly continuous on $K$. 
			We show that $F$ approximates $T$ with $U$-precision on the whole compact $K$. For any $x\in K$, we have that $x \in A_{\lambda_{j(t)}}$ for $t = 1, \ldots, q$ and $j(t) \in \{1,\ldots,k \}$, where $q\leq d+1$ since the order of $\{A_\lambda\}$ is $d+1$. Therefore $\eta_{\lambda_i}(x) \neq 0$ for at most $d+1$ terms. It follows that
			\begin{eqnarray*}
				T(x) - F(x) = \sum_j\eta_{\lambda_j}(x)(T(x) - T(x_j)) \in \overbrace{W + \cdots + W}^{{\rm (d+1)-times}} \subset U.
			\end{eqnarray*} 
			By Kat{\v{e}}tov's theorem \cite{Katetov}, we can extend each restriction ${\eta_i}_{|K}$ to a uniformly continuous function over the whole space $X$, and we will denote such extension by $\hat \eta_i$. Then, we set 
			\begin{eqnarray*}
				\hat F(x) &=& \sum_{i=1}^r \hat \eta_i(x)T(x_i), 
			\end{eqnarray*}
			which is well defined and uniformly continuous on the whole space $X$. Such $\hat F$ is the claimed uniformly continuous extension. 
	\end{proof}

	In the following, neither $X$ nor $Y$ need to be locally-convex. 
	\begin{theorem}\label{thm:Universal_NLC}
		Let $X$ and $Y$ be TVSs, where $X$ is paracompact, and let $T: X\longrightarrow Y$ be a continuous map. Suppose $K\subset X$ is a compact subset with finite topological dimension. Then, for any choice of an neighborhood $U$ of $0$, there exist natural numbers $n,m\in \mathbb N$, finite dimensional subspaces $E_n \subset X$ and $E_m \subset Y$, a continuous map $P_X : X\longrightarrow E_n$, and a neural network $f_{n,m} : \mathbb R^n \longrightarrow \mathbb R^m$ such that for every $x\in K$
		\begin{eqnarray}
		T(x) - \varphi_m^{-1}f_{n,m}\varphi_nP_X(x) &\in& U,
		\end{eqnarray}
		where $\varphi_k : E_k \longrightarrow \mathbb R^k$ indicates a linear homeomorphism between the finite dimensional space $E_k$ and $\mathbb R^k$.  
	\end{theorem}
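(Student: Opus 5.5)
The plan is to follow the three-term decomposition from the proof of Theorem~\ref{thm:Universal}. Convexity is no longer available, so each convex-combination step will be replaced by the finite-order trick of Lemma~\ref{lem:nonlocal_T_approx}. At any point of $K$ only $d+1$ terms of the partition of unity are nonzero, with $d=\dim K$, so a convex combination of $d+1$ vectors in a balanced $W$ lies in $W+\cdots+W$ ($d+1$ times).

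First I fix balanced $V\in\mathcal U$ with $V+V+V\subseteq U$. Applying Lemma~\ref{lem:nonlocal_T_approx} with $V$ in place of $U$ gives $F(x)=\sum_{i=1}^r\eta_i(x)T(x_i)$, with $T(x)-F(x)\in V$ on $K$, and its extension $\hat F:X\to E$. Here $E=\mathrm{span}\{T(x_1),\dots,T(x_r)\}$ is finite dimensional, and I take $E_m:=E$ with $m=\dim E$.

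Next I set $E_n:=\mathrm{span}\{x_1,\dots,x_r\}\subset X$ and $P_X:=\sum_i\hat\eta_i(\cdot)\,x_i$, and choose the linear homeomorphisms $\varphi_n,\varphi_m$ (Theorem~1.21 of \cite{Rudin}). The crucial observation is that $F$ already factors through the coefficient vector $(\eta_1(x),\dots,\eta_r(x))$. So rather than approximating $T$ near $P_X(x)$, which would require uniform continuity and a second finite-order argument in $X$, I will define $G:\varphi_nP_X(K)\to\R^m$ by $G(\varphi_nP_X(x))=\varphi_mF(x)$. The catch is that $G$ is well defined only if $P_X(x)$ determines $(\eta_i(x))_i$. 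This holds when $x_1,\dots,x_r$ are linearly independent, and then $P_X$ is injective on coefficient vectors, so $G=\varphi_m\circ L\circ\varphi_n^{-1}$ with $L$ the linear map $x_i\mapsto T(x_i)$. Thus $G$ is even linear and continuous.

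I expect the main obstacle to be independence of the $x_i$. The centers $x_i$ may be perturbed freely, provided each $A_{\lambda_i}\subset T^{-1}(T(x_i)+W)$ still holds. The condition is open in $x_i$: shrink $W$ first to $W'$ with $W'+W'\subseteq W$, cover by the sets $T^{-1}(T(x)+W')$, and then any $x_i'$ near $x_i$ with $T(x_i')-T(x_i)\in W'$ still works. Since $X$ is infinite dimensional (otherwise the result is classical), a finite-dimensional subspace has empty interior, so I can inductively choose $x_i'$ outside $\mathrm{span}\{x_1',\dots,x_{i-1}'\}$ within any prescribed neighborhood of $x_i$. The centers need not lie in $K$. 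As a fallback, one could instead take $E_n=\R^r$ abstractly via $x\mapsto(\hat\eta_i(x))_i$, but the statement asks for $E_n\subset X$, so the perturbation route is preferred.

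Finally, choose $\varepsilon>0$ such that $\|w_1-w_2\|<\varepsilon$ implies $\varphi_m^{-1}w_1-\varphi_m^{-1}w_2\in V$; this is possible by continuity of $\varphi_m^{-1}$ and compactness of the relevant sets in $\R^m$. Then approximate $G$ uniformly on the compact set $\varphi_nP_X(K)$ by a network $f_{n,m}$ using \cite{Pinkus}. On $K$ the error splits as
\[
T(x)-\varphi_m^{-1}f_{n,m}\varphi_nP_X(x)=\bigl(T(x)-F(x)\bigr)+\bigl(\varphi_m^{-1}G\varphi_nP_X(x)-\varphi_m^{-1}f_{n,m}\varphi_nP_X(x)\bigr)\in V+V\subseteq U.
\]
Continuity of $P_X$ on all of $X$ comes from the Katětov extension $\hat\eta_i$, as in Lemma~\ref{lem:nonlocal_T_approx}. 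Paracompactness of $X$ is used there for the partition of unity, and no paracompactness of $Y$ is needed since no partition of unity is built in $Y$.
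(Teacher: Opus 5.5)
Your argument is correct, but after the first step it takes a different route from the paper's.

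\textbf{The paper's route.} The paper applies Lemma~\ref{lem:nonlocal_T_approx} twice. Applied to $T$, it gives a uniformly continuous $\tilde T$ with finite-dimensional range. Applied to the identity of $X$, it gives a near-identity encoder with $x-P_X(x)\in V'$ on $K$. The paper then controls the middle term $\tilde T(x)-\tilde T(P_X(x))$ by uniform continuity of $\tilde T$. This is where the Kat\v{e}tov extension matters, since $P_X(x)$ generally leaves $K$. In that route the network must approximate the genuinely nonlinear map $\varphi_m\tilde T\varphi_n^{-1}$.

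\textbf{Your route.} You use a single partition of unity. Once the centers are linearly independent, $F=L\circ P_X$ on $K$ with $L$ linear, so the paper's middle term has no counterpart and $V+V$ suffices. This buys a leaner proof:
\begin{itemize}
\item no uniform continuity is needed;
\item there is no second finite-order argument in $X$;
\item no Kat\v{e}tov extension is needed at all, because the $\eta_i$ of the partition of unity on the paracompact $X$ are already continuous on all of $X$;
\item the network can be taken linear, with all nonlinearity carried by a $T$-adapted encoder that no longer approximates the identity.
\end{itemize}

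\textbf{The costs.} The first is the perturbation step, which is sound. With $W'$ balanced and $W'+W'\subseteq W$, the set $T^{-1}(T(x_i)+W')$ is an open neighborhood of $x_i$. Any $x_i'$ in it preserves the covering estimate. Proper subspaces of a TVS have empty interior, so $x_i'$ can be chosen outside the span of the earlier centers.

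The second cost is a separate finite-dimensional case, which is not quite \emph{classical} because $Y$ is still arbitrary. You still need the finite-rank $F$ from the lemma. After that, taking $P_X=\mathrm{id}_X$ and $G=\varphi_mF\varphi_n^{-1}$ on $\varphi_n(K)$ closes the argument.

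\textbf{Minor points.} Linear independence is sufficient, not necessary, for $G$ to be well defined. The choice of $\varepsilon$ needs only linearity and continuity of $\varphi_m^{-1}$, not compactness.
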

	\begin{proof}
			We show how to apply Lemma~\ref{lem:nonlocal_T_approx} to replace the use of local convexity in the proof of Theorem~\ref{thm:Universal}. 
			
			Let $V\in \mathcal U_Y$ be such that $V + V + V \subseteq U$. Applying Lemma~\ref{lem:nonlocal_T_approx} to $T$ on $K$, we find a uniformly continuous map $\tilde T : X \longrightarrow E_m$, where $E_m \leq Y$ is a subspace of $Y$ with dimension $m$, such that $T(x) - \tilde T(x) \in V$ for each $x\in K$. We let $V'\in \mathcal U_X$ be such that $\tilde T(z_1) - \tilde T(z_2) \in V$ whenever $z_1-z_2\in V'$, by uniform continuity of $\tilde T$. 
			We can construct a continuous map $P_X : X\longrightarrow E_n$ such that $x-P_X(x) \in V'$ for each $x\in K$, by applying Lemma~\ref{lem:nonlocal_T_approx} to the identity map on $X$, where $n$ is the dimension of $E_n \leq X$. Let $C = \varphi_n(P_X(K)) \subset \R^n$ be compact, and define $G: C \longrightarrow \R^m$ as $G = \varphi_m\tilde T{\varphi_n^{-1}}_{|C}$. Let $f_{n,m}$ be a neural network approximating $G$ with sufficiently high precision in the uniform norm of $C(C,\R^m)$ induced by the Euclidean norm of $\R^m$ as in the proof of Theorem~\ref{thm:Universal}. Namely, we have $\|G(u) - f_{n,m}(u)\| < \varepsilon$ for each $u\in C$, where $\varepsilon$ is chosen in such a way that $\varphi_m^{-1}(w_1) - \varphi_m^{-1}(w_2) \in V$ whenever $\|w_1-w_2\| < \varepsilon$, and $\|\cdot\|$ indicates the Euclidean norm of $\R^m$. Then, following the same procedure of Theorem~\ref{thm:Universal} we see that for each $x\in K$
			\begin{eqnarray*}
					&&T(x) - \varphi_m^{-1}f_{n,m}\varphi_nP_X(x) = T(x) - \tilde T(x) + \tilde T(x) - \tilde T(P_X(x)) + \tilde T(P_X(x)) - \varphi_m^{-1}f_{n,m}\varphi_nP_X(x)\\
					&& \in V + V + V \subseteq U. 
			\end{eqnarray*}
	\end{proof}


\begin{thebibliography}{0}
		
		\bib{Bil-Xan}{article}{
			title={A universal approximation theorem and its applications to vector lattice theory},
			author={Bilokopytov, Eugene},
			author={Xanthos, Foivos},
			journal={Journal of Mathematical Analysis and Applications},
			pages={130632},
			year={2026},
			publisher={Elsevier}
		}
		
		\bib{Chen-Chen}{article}{
			title={Universal approximation to nonlinear operators by neural networks with arbitrary activation functions and its application to dynamical systems},
			author={Chen, Tianping},
			author={Chen, Hong},
			journal={IEEE transactions on neural networks},
			volume={6},
			number={4},
			pages={911--917},
			year={1995},
			publisher={IEEE}
		}
		
		\bib{Frolik}{article}{
			title={Existence of $\ell_\infty$-partitions of unity},
			author={Frol{\'\i}k, Zden{\v{e}}k},
			journal={Rend. Sem. Mat. Univ. Politec. Torino},
			volume={42},
			number={1},
			pages={9--14},
			year={1984}
		}
		
		\bib{Gar-Mil}{article}{
			title={On the Extension of Uniformly Continuous Functions (1)},
			author={Gardner, LT},
			author={Milnes, P},
			journal={Canadian Mathematical Bulletin},
			volume={18},
			number={1},
			pages={143--145},
			year={1975},
			publisher={Cambridge University Press}
		}
		
		\bib{Ismailov}{article}{
			title={Universal approximation theorem for neural networks with inputs from a topological vector space},
			author={Ismailov, Vugar E},
			journal={Information Processing Letters},
			pages={106623},
			year={2026},
			publisher={Elsevier}
		}
	
		\bib{Kantorovich-Akilov}{book}{
			title={Functional analysis},
			author={Kantorovich, Leonid Vitalevich},
			author={Akilov, Gleb Pavlovich},
			year={2016},
			publisher={Elsevier}
		}
		
		\bib{Katetov}{article}{
			title={On real-valued functions in topological spaces},
			author={Kat{\v{e}}tov, Miroslav},
			journal={Fundamenta Mathematicae},
			volume={38},
			pages={85--91},
			year={1951},
			publisher={Instytut Matematyczny Polskiej Akademii Nauk}
		}
	
		\bib{Katetov2}{article}{
			title={Correction to" On real-valued functions in topological spaces"},
			author={Kat{\v{e}}tov, Miroslav},
			journal={Fundamenta Mathematicae},
			volume={40},
			pages={203--205},
			year={1953},
			publisher={Instytut Matematyczny Polskiej Akademii Nauk}
		}
	
		\bib{Kovachki}{article}{
			title={Operator learning: Algorithms and analysis},
			author={Kovachki, Nikola B},
			author={Lanthaler, Samuel},
			author={Stuart, Andrew M},
			journal={Handbook of Numerical Analysis},
			volume={25},
			pages={419--467},
			year={2024},
			publisher={Elsevier}
		}
	
		\bib{Lanthaler}{article}{
			title={Error estimates for deeponets: A deep learning framework in infinite dimensions},
			author={Lanthaler, Samuel},
			author={Mishra, Siddhartha},
			author={Karniadakis, George E},
			journal={Transactions of Mathematics and Its Applications},
			volume={6},
			number={1},
			pages={tnac001},
			year={2022},
			publisher={Oxford University Press}
		}
	
		\bib{Lu}{article}{
			title={Learning nonlinear operators via DeepONet based on the universal approximation theorem of operators},
			author={Lu, Lu},
			author={Jin, Pengzhan},
			author={Pang, Guofei},
			author={Zhang, Zhongqiang},
			author={Karniadakis, George Em},
			journal={Nature machine intelligence},
			volume={3},
			number={3},
			pages={218--229},
			year={2021},
			publisher={Nature Publishing Group UK London}
		}
		
		\bib{Pinkus}{article}{
			title={Approximation theory of the MLP model in neural networks},
			author={Pinkus, Allan},
			journal={Acta numerica},
			volume={8},
			pages={143--195},
			year={1999},
			publisher={Cambridge University Press}
		}
		
		\bib{Rob-Rob}{article}{
			title={Uniformly Continuous Partitions of Unity on a Metric Space},
			author={Robinson, Stewart M},
			author={Robinson, Zachary},
			journal={Canadian Mathematical Bulletin},
			volume={26},
			number={1},
			pages={115--117},
			year={1983},
			publisher={Cambridge University Press}
		}
		
		\bib{Rudin}{book}{
			title={Functional Analysis},
			author={Rudin, Walter},
			edition={2nd},
			year={1991},
			publisher={McGraw-Hill Education},
		}
	
		\bib{Saini}{article}{
			title={A Universal Approximation Theorem for Neural Networks with Outputs in Locally Convex Spaces},
			author={Saini, Sachin},
			journal={arXiv preprint arXiv:2603.07242},
			year={2026}
		}
		
		\bib{Projection}{article}{
			title={Projection Methods for Operator Learning and Universal Approximation},
			author={Zappala, Emanuele},
			journal={arXiv preprint arXiv:2406.12264},
			year={2024}
		}
		
		\bib{Leray-Schauder}{article}{
			title={Leray-Schauder Mappings for Operator Learning},
			author={Zappala, Emanuele},
			journal={arXiv preprint arXiv:2410.01746},
			year={2024}
		}
		
	\end{thebibliography}
\end{document}